\newtheorem{thm}{Theorem}[section]
\newtheorem{lem}[thm]{Lemma}
\newtheorem{prop}[thm]{Proposition}
\newtheorem{algorithm}{Algorithm}[section]
\newcommand\be{\begin{equation}}
\newcommand\ee{\end{equation}}
\newcommand\half{\frac{1}{2}}
\newcommand\CC{{\mathbb C}}
\newcommand\OO{{\cal O}}
\newcommand\RR{{\mathbb R}}
\newcommand\ZZ{{\mathbb Z}}
\title[Preconditioned CG and RBFs]
{Preconditioned Conjugate Gradients, Radial Basis Functions
and Toeplitz Matrices}
\author[B.~J.~C.~Baxter]{B.~J.~C.~Baxter\\
Department of Mathematics, 
Imperial College\\
London SW7 2BZ, 
England\\ 
{\tt b.baxter@ic.ac.uk}  \\
{\tt www.ma.ic.ac.uk/$\sim$baxter}\\}
\begin{document}

\maketitle

\def\chptitle{Preconditioned conjugate gradients}


\def\pts{{(x_j)_{j=1}^n}}

\def\sjk{{\sum_{j,k=1}^n y_j y_k\,}}
\def\djk{{\| x_j - x_k \|}}
\def\xjk{{x_j-x_k}}
\def\yjk{{|\sum_{j=1}^n y_j e^{ix_j t}|^2}}
\def\trig{{|\sum_{j=1}^n y_j e^{ijt}|^2}}
\def\sjzd{{\sum_{j\in\Zd}}}
\def\skzd{{\sum_{k\in\Zd}}}

\def\rint{{\int_{-\infty}^{\infty}}}
\def\hint{{\int_0^\infty}}

\def\mud{{\mu_{d-1}}}
\def\Sdm1{{S^{d-1}}}

\def\ysqr{{\Vert y \Vert^2}}

\def\symbol{{\sum_{k\in\ZZ} |\phihat(t+2k\pi)| }}

\def\ainv{{A_n^{-1}}}
\def\aninv{{\Vert A_n^{-1} \Vert_2}}

\def\zsum{{\sum_{k=-\infty}^\infty}}

\def\hi{{(2\pi)^{-1}}}
\def\tint{{\hi\int_0^{2\pi}}}
\def\tintd{{(2\pi)^{-d}\int_{\Td}}}
\def\fhat{{\hat f}}
\def\ajxi{{\Bigl|\sum_{j\in\Zd} a_j \exp(ij\xi)\Bigl|^2}}
\def\Rdm0{{\Rd\setminus\{0\}}}
\def\ghat{{\hat g}}
\def\ytrigo{{\Bigl|\sum_{j\in\Zd} y_j \exp(ix_j \xi)\Bigl|^2}}
\def\ytrig2{{\Bigl|\sum_{j\in\Zd} y_j \exp(ij \xi)\Bigl|^2}}
\def\ynj{{y_j^{(n)}}}

\def\phichat{{\phihat_c}}
\def\chichat{{{\hat \chi}_c}}

\def\min{{\rm min}}


\def\pf{{\noindent{\it Proof.\ }}}
\def\qed{{\hfill$\square$\vskip 10pt}}

\def\Rd{{\RR^d}}
\def\Rn{{\RR^n}}
\def\ZZ{{{\cal Z} }}
\def\Z0{{\sum_{j=1}^n y_j = 0}}
\def\Zd{{\ZZ^d}}
\def\T1{{[0,2\pi]}}
\def\Td{{[0,2\pi]^d}}
\def\CC{{\cal C}}

\def\half{{1\over2}}
\def\OO{{\cal O}}
\def\phi{{\varphi}}
\def\phihat{{\hat\varphi}}


\def\nullx{\hfill}

\def\theta{{\vartheta}}
\def\psihat{\hat\psi}
\def\Rnn{\RR^{n \times n}}
\def\Image{\hbox{Im\ }}
\def\Sp{\hbox{Sp\ }}
\def\vol{\hbox{vol}}
\def\Cd{\CC^d}


\begin{abstract}
Radial basis functions provide highly useful and flexible interpolants
to multivariate functions. Further, they are beginning to be used in
the numerical solution of partial differential equations.
Unfortunately, their construction requires
the solution of a dense linear system. Therefore much attention has
been given to iterative methods. In this paper, we present a highly
efficient preconditioner for the conjugate gradient solution of the
interpolation equations generated by gridded data. Thus our method
applies to the corresponding Toeplitz matrices. The
number of iterations required to achieve a given tolerance 
is independent of the number of
variables.
\end{abstract}

\section{Introduction}
A radial basis function approximation has the form
\[
 s(x) = \sum_{j=1}^n y_j\, \phi(\|x-x_j\|), \qquad x \in \Rd, 
\]
where $\phi \colon [0,\infty) \to \RR$ is some given function,
$(y_j)_1^n$ are real coefficients, and the centres $(x_j)_1^n$ are points
in $\Rd$; the norm $\|\cdot\|$ will be Euclidean throughout this study.
For a wide class of functions $\phi$, it is known that the interpolation
matrix 
\[
A = (\phi(\|x_j-x_k\|))_{j,k=1}^n
\]
is invertible. This matrix is typically full, which fact has encouraged the
study of iterative methods. 
For example, highly promising results have been published in the use
of radial basis functions in collocation and Galerkin 
methods for the numerical solution of partial differential equations
(see \citeasnoun{FrankeSchaback} and \citeasnoun{Wendland}), but
direct solution limits their applicability to fairly small problems.
The use of the preconditioned conjugate
gradient algorithm was pioneered by \citeasnoun{DynLevinRippa}, and
some stunning results for scattered data were presented recently in 
\citeasnoun{FaulPowell}, although the rapid convergence described
there
is not fully understood. Therefore we study the highly
structured case when the data form a finite regular grid. The
conjugate gradient algorithm has been applied to Toeplitz matrices
with some success; see, for instance, \citeasnoun{Chan}. However,
since our matrices are usually {\em not} positive definite and often
possess elements that grow away from the diagonal, the preconditioners
of \citeasnoun{Chan} are not suitable. However, the matrices have the
property that their inverses tractable more tractable. Specifically, the
detailed study of the spectra of the associated Toeplitz operators
presented in Baxter (1992) and Baxter (1994) allows us to create
highly efficient preconditioners by inverting relatively small finite
sections of the bi-infinite symmetric Toeplitz operator, and this
construct is also easily understood via Toeplitz theory.

Let $n$ be a positive integer and let $A_n$ be the symmetric Toeplitz
matrix given by
\be
 A_n = \left(\phi(j-k)\right)_{j,k=-n}^n, \label{chapno1.1}
\ee
where $\phi \colon \RR \to \RR$ is either a Gaussian ($\phi(x) =
\exp(-\lambda x^2)$ for some positive constant $\lambda$) or a
multiquadric ($\phi(x) = (x^2 + c^2)^{1/2}$ for some real constant
$c$). In this paper we construct efficient preconditioners for the
conjugate gradient solution of the linear system
\be
A_n x = f, \qquad f \in \RR^{2n+1}, 
\label{chapno1.2}\ee
when $\phi$ is a Gaussian, or the augmented linear system
\begin{eqnarray} 
            A_n x + e y &=& f, \\
            e^T x &=& 0,
\label{chapno1.3}
\end{eqnarray}
when $\phi$ is a multiquadric. Here $e = [1, 1, \ldots, 1]^T 
\in \RR^{2n+1}$ and $y \in \RR$.
Section 2 describes the construction for the Gaussian and 
Section 3
deals with the multiquadric. Of course, we exploit the Toeplitz
structure of $A_n$ to perform a matrix-vector multiplication in $\OO(n
\log n)$ operations whilst storing $\OO(n)$ real numbers. Further, we
shall see numerically that the number of iterations required to
achieve a solution of (\ref{chapno1.2}) or (\ref{chapno1.3}) 
to within a given tolerance is
independent of $n$. The {\sc Matlab} software used can be obtained
from my homepage.

Our method applies to many other radial basis functions, such as the
inverse multiquadric ($\phi(x) = (x^2 + c^2)^{-1/2}$) and the thin
plate spline ($\phi(x) = x^2 \log |x|$). However, we concentrate on
the Gaussian and the multiquadric because they exhibit most of the
important features of our approach in a concrete setting. Similarly we
treat the one-dimensional problem merely to avoid complication; the
multidimensional case is a rather slight generalization of this work.
Let us remark that the analogue
of (\ref{chapno1.1}) is the operator
\be
 A_n^{(d)} = \left( \phi(j-k) \right)_{j,k\in [-n,n]^d},
\label{chapno1.4}\ee
and we shall still call $A_n^{(d)}$ a Toeplitz matrix. Moreover
the matrix-vector multiplication
\be A_n^{(d)} x = \left( \sum_{k \in [-n,n]^d} \phi(\|j-k\|) x_k
\right)_{j\in [-n,n]^d}, \label{chapno1.5}\ee
where $\|\cdot\|$ is the Euclidean norm and $x = (x_j)_{j\in [-n,n]^d}$, can
still be calculated in $\OO(N \log N)$ operations, where $N = (2n+1)^d$,
requiring $\OO(N)$ real numbers to be stored. This trick is a simple extension
of the Toeplitz matrix-vector multiplication method when $d = 1$.

\section{The Gaussian}
It is well-known that the Gaussian generates a positive definite
interpolation matrix, and its functional decay is so rapid that
preconditioning the conjugate gradient algorithm is not
necessary. However, it provides a useful model problem that we shall
describe here before developing the ideas further in the following
section.

Our treatment of the preconditioned conjugate gradient (PCG) method follows
Section 10.3 of Golub and Van Loan (1989), and we begin with a general
description. We let $n$ be a positive integer and $A \in \Rnn$ 
be an arbitrary symmetric
positive definite matrix. For any nonsingular symmetric matrix $P \in \Rnn$
and $b \in \RR^n$ we can use the following iteration to solve the linear system
$PAPx =Pb$.

\begin{algorithm}
Choose any $x_0$ in $\Rn$. Set $r_0 = Pb - PAP x_0$
and $d_0 = r_0$.

{\obeylines 
For $k=0, 1, 2, \ldots$ do begin
\quad $a_k = r_k^Tr_k / d_k^T PAP d_k$
\qquad $x_{k+1} = x_k + a_k d_k$
\qquad $r_{k+1} = r_k - a_k PAP d_k$
\quad $b_k = r_{k+1}^T r_{k+1} / r_k^T r_k$
\qquad $d_{k+1} = r_{k+1} + b_k d_k$
\quad Stop if $\|r_{k+1}\|$ or $\|d_{k+1}\|$ is sufficiently small.
end.\par}

\label{alg2.1}
\end{algorithm}

\noindent In order to simplify Algorithm \ref{alg2.1} define
\be
C = P^2, \qquad
\xi_k = P x_k, \qquad
r_k = P \rho_k \qquad
\hbox{ and } \qquad \delta_k = P d_k.
\label{chapno2.1}\ee
Substituting in Algorithm \ref{alg2.1} we obtain the following method.

\begin{algorithm} 
Choose any $\xi_0$ in $\Rn$. Set
$\rho_0 = b - A \xi_0$, $\delta_0 = C \rho_0$.

{\obeylines 
For $k=0, 1, 2, \ldots$ do begin
\quad $a_k = \rho_k^T C \rho_k / \delta_k^T A \delta_k$
\qquad $\xi_{k+1} = \xi_k + a_k \delta_k$
\qquad $\rho_{k+1} = \rho_k - a_k A \delta_k$
\quad $b_k = \rho_{k+1}^T C \rho_{k+1} / \rho_k^T C \rho_k$
\qquad $\delta_{k+1} = C \rho_{k+1} + b_k \delta_k$
\quad Stop if $\|\rho_{k+1}\|$ or $\|\delta_{k+1}\|$ is sufficiently small.
end.\par}

\label{alg2.2}
\end{algorithm}

It is Algorithm \ref{alg2.2} that we shall consider as our PCG method in this
section, and we shall call $C$ the preconditioner. We see that the
only restriction on $C$ is that it must be a symmetric positive
definite matrix, but we observe that the spectrum of $CA$ should
consist of a small number of clusters, preferably one cluster
concentrated at one. At this point, we also mention that the condition
number of $CA$ is not a reliable guide to the efficacy of our
preconditioner. For example, consider the two cases when (i) $CA$ has
only two different eigenvalues, say $1$ and $100,000$, and (ii) when
$CA$ has eigenvalues uniformly distributed in the interval $[1, 100]$.
The former has the larger condition number but, in exact
arithmetic, the answer will be achieved in two steps, whereas the
number of steps can be as high as $n$ in the latter case. Thus the
term ``preconditioner'' is sometimes inappropriate, although its usage
has become standard.

In this paper we concentrate on preconditioners for the Toeplitz matrices generated
by radial basis function interpolation on a (finite) regular grid.
Accordingly, we let $A$ be the matrix $A_n$ of (\ref{chapno1.1}) and 
let $\phi(x) = \exp(-
x^2)$. Thus $A_n$ is positive definite and can be embedded in the 
bi-infinite symmetric
Toeplitz matrix
\be A_\infty = \left(\phi(j-k) \right)_{j,k\in\ZZ}. \label{chapno2.2}\ee
The classical theory of Toeplitz operators (see, for instance,
Grenander and Szeg\H{o} (1984)) and the work of Baxter (1994) provide the
relations   
\be \Sp A_n \subset \Sp A_\infty = [\sigma(\pi), \sigma(0)] \subset (0,\infty),
\label{chapno2.3}\ee
where $\sigma$ is the symbol function
\be \sigma(\xi) = \sum_{k\in\ZZ} \phihat(\xi+2\pi k), \qquad \xi \in
\RR, \label{chapno2.4}\ee
and $\Sp A_\infty$ denotes the spectrum of the operator $A_\infty$.
Further, Theorem 9 of Buhmann and Micchelli (1991) allows us to
conclude that, for any fixed integers $j$ and $k$, we have
\be \lim_{n \to \infty} (A_n^{-1})_{j,k} = (A_\infty^{-1})_{j,k}.
\label{chapno2.5}\ee
It was equations (\ref{chapno2.3}) and (\ref{chapno2.5}) which led us to investigate the possibility of
using some of the elements of $A_n^{-1}$ for a relatively small value of $n$
to construct preconditioners for $A_N$, where $N$ may be much larger
than $n$. Specifically, let us choose integers $0 < m \le n$ and define the
sequence 
\be
c_j = (A_n^{-1})_{j0}, \qquad j = -m, \ldots, m. 
\label{chapno2.6}\ee
We now let $C_N$ be the $(2N+1) \times (2N+1)$ banded symmetric Toeplitz
matrix
\be C_N = \pmatrix{ c_0 & \ldots & c_m & & & \cr
                   \vdots & \ddots & & \ddots & & \cr
                   c_m & & & & & \cr
                   & \ddots & & & & c_m \cr
                   & & & & & \vdots \cr
                   & & & c_m & \ldots & c_0 \cr}. \label{chapno2.7}
\ee
We claim that, for sufficiently large $m$ and $n$, $C_N$ provides an
excellent preconditioner when $A = A_N$ in Algorithm \ref{alg2.2}. Before
discussing any theoretical motivation for this choice of
preconditioner, we present an example. We let $n=64$, $m=9$ and $N =
32,768$. Constructing $A_n$ and calculating the elements $\{
(A_n^{-1})_{j0}: j=0, 1, \ldots, m \}$ we find that
\be \pmatrix{ c_0 \cr c_1 \cr \vdots \cr c_9 } 
  = \pmatrix{    
  \ \,\,1.4301 \times 10^0 \cr
  -5.9563 \times 10^{-1}\cr
  \ \,\, 2.2265 \times 10^{-1}\cr
  -8.2083 \times 10^{-2}\cr
  \ \,\, 3.0205 \times 10^{-2}\cr
  -1.1112 \times 10^{-2}\cr
  \ \,\, 4.0880 \times 10^{-3}\cr
  -1.5039 \times 10^{-3}\cr
  \ \,\, 5.5325 \times 10^{-4}\cr
  -2.0353 \times 10^{-4}\cr}.
\label{chapno2.8}\ee

\begin{figure}[h]
\begin{center}
\psfig{file=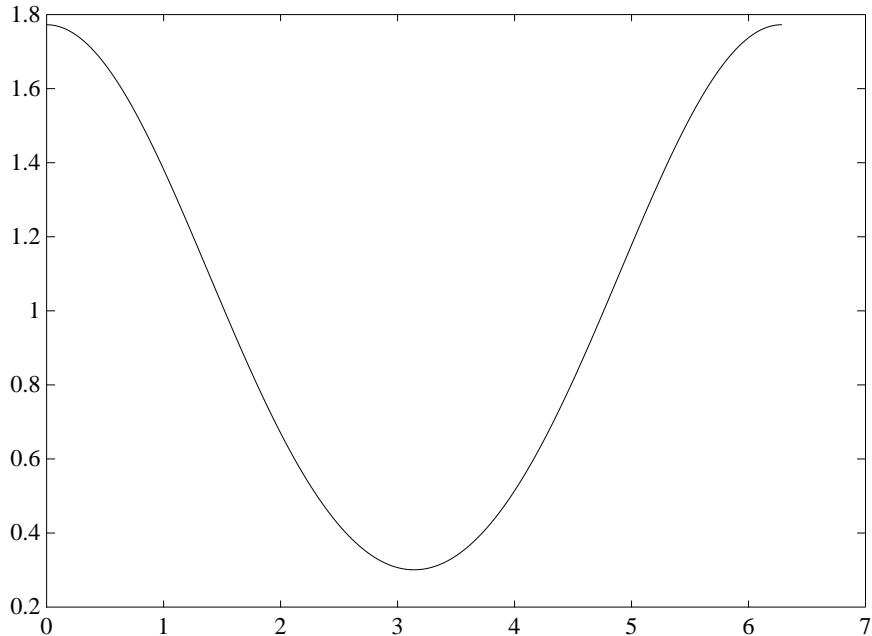,width=5in}
\caption{The symbol function for $C_\infty$.}
\label{fig1}
\end{center}
\end{figure}

Now $C_N$ can be embedded in the bi-infinite Toeplitz matrix
$C_\infty$ defined by
\be (C_\infty)_{jk} = \cases{ c_{j-k}, \qquad |j-k| \le m, \cr 0,
\qquad |j-k| > m,} \label{chapno2.9}\ee
and the symbol for this operator is the trigonometric polynomial
\be \sigma_{C_\infty}(\xi) = \sum_{j=-m}^m c_j e^{ij\xi}, \qquad \xi
\in \RR. \label{chapno2.10}\ee

In Figure \ref{fig1} we display a graph of $\sigma_{C_\infty}$ for $0 \le \xi
\le 2\pi$, and it is clearly a positive function. Thus the relations
\be \Sp C_N \subset \Sp C_\infty = \{\sigma_{C_\infty}(\xi) : \xi \in
[0,2\pi] \} \subset (0, \infty) \label{chapno2.11}\ee
imply that $C_N$ is positive definite. Hence it is suitable to use
$C_N$ as the preconditioner in Algorithm \ref{alg2.2}. Our aim in this example
is to compare this choice of preconditioner with the use of the
identity matrix as the preconditioner. To this end, we let the
elements of the vector $b$ of Algorithm \ref{alg2.2} be random
real numbers uniformly distributed in the interval $[-1,1]$. Applying
Algorithm \ref{alg2.2} using the identity matrix as the preconditioner provides
the results of Table \ref{table1}. 
Table \ref{table2} contains the analogous results
using (\ref{chapno2.7}) and (\ref{chapno2.8}). 
In both cases the iterations were stopped when
the residual vector satisfied the bound $\|r_{k+1}\| /\|b\| < 10^{-13}$.
The behaviour shown in the tables is typical; we find that the number of steps
required is independent of $N$ and $b$.

\begin{table}
\caption{No preconditioning}
\begin{tabular}{r|r}\hline\hline
{\rm Iteration}&{\rm Error}\\
\hline
$1$& $2.797904 \times 10^{1}$\\
$10$& $1.214777 \times 10^{-2}$\\
$20$& $1.886333 \times 10^{-6}$\\
$30$& $2.945903 \times 10^{-10}$\\
$33$& $2.144110 \times 10^{-11}$\\
$34$& $8.935534 \times 10^{-12}$\\
\hline
\end{tabular}
\label{table1}
\end{table}

\begin{table}
\caption{Using (\ref{chapno2.7}) and
(\ref{chapno2.8}) as the preconditioner}
\begin{tabular}{r|r}\hline\hline
{\rm Iteration}&{\rm Error}\\
\hline
$1$& $2.315776 \times 10^{-1}$\cr
$2$& $1.915017 \times 10^{-3}$\cr
$3$& $1.514617 \times 10^{-7}$\cr
$4$& $1.365228 \times 10^{-11}$\cr
$5$& $1.716123 \times 10^{-15}$\cr
\hline
\end{tabular}
\label{table2}
\end{table}

Why should (\ref{chapno2.7}) and (\ref{chapno2.8}) provide a good preconditioner? Let us
consider the bi-infinite Toeplitz matrix $C_\infty A_\infty$. The
spectrum of this operator is given by
\be \Sp C_\infty A_\infty = \{ \sigma_{C_\infty}(\xi) \sigma(\xi) : \xi
\in [0,2\pi] \}, \label{chapno2.12}\ee
where $\sigma$ is given by (\ref{chapno2.4}) and $\sigma_{C_\infty}$ by (\ref{chapno2.10}).
Therefore in order to concentrate $\Sp C_\infty A_\infty$ at unity we
must have
\be \sigma_{C_\infty}(\xi) \sigma(\xi) \approx 1, \qquad \xi \in
[0,2\pi]. \label{chapno2.13}\ee
In other words, we want $\sigma_{C_\infty}$ to be a trigonometric
polynomial approximating the continuous function $1/\sigma$. Now if
the Fourier series of $1/\sigma$ is given by
\be \sigma^{-1}(\xi) = \sum_{j\in\ZZ} \gamma_j e^{ij\xi}, \qquad \xi
\in \RR, \label{chapno2.14}\ee
then its Fourier coefficients $(\gamma_j)_{j\in\ZZ}$ are the
coefficients of the cardinal function $\chi$ for the integer grid,
that is
\be \chi(x) = \sum_{j\in\ZZ} \gamma_j \phi(x-j), \qquad x \in \RR,
\label{chapno2.15}\ee
and 
\be   \chi(k) = \delta_{0k}, \qquad k \in \ZZ. \label{chapno2.16}\ee
(See, for instance, Buhmann (1990).) Recalling (\ref{chapno2.5}), we deduce that
one way to calculate approximate values of the coefficients
$(\gamma_j)_{j\in\ZZ}$ is to solve the linear system
\be A_n c^{(n)} = e^0, \label{chapno2.17}\ee
where $e^0 = (\delta_{j0})_{j=-n}^n \in \RR^{2n+1}$. 
We now set
\be c_j = c_j^{(n)}, \qquad 0 \le j \le m, \label{chapno2.18}\ee
and we observe that the symbol function $\sigma$ for the Gaussian is a
theta function (see Baxter (1994), Section 2). Thus $\sigma$ is a positive
continuous function whose Fourier series is absolutely convergent.
Hence $1/\sigma$ is a positive continuous function and Wiener's lemma
(Rudin (1973)) implies the absolute convergence, and therefore the uniform
convergence, of its Fourier series. We deduce that the symbol function
$\sigma_{C_\infty}$ can be chosen to approximate $1/\sigma$ to within
any required accuracy. More formally we have the 

\begin{lem}
Given any $\epsilon > 0$, there are positive
integers $m$ and $n_0$ such that
\be \Bigl| \sigma(\xi)\sum_{j=-m}^m c_j^{(n)} e^{ij\xi} - 1 \Bigr| \le
\epsilon, \qquad \xi \in [0,2\pi],\ee
for every $n \ge n_0$, where $c^{(n)} = (c_j^{(n)})_{j=-n}^n$ is given
by (\ref{chapno2.17}).
\end{lem}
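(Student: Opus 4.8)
The plan is to split the estimate $|\sigma(\xi)\sum_{j=-m}^m c_j^{(n)} e^{ij\xi} - 1|$ into two contributions: the error in truncating the exact Fourier series of $1/\sigma$ at $m$ terms, and the error incurred by replacing the exact Fourier coefficients $\gamma_j$ with the finite-section approximations $c_j^{(n)}$. Write $\sigma_{C_\infty}^{(n,m)}(\xi) = \sum_{j=-m}^m c_j^{(n)} e^{ij\xi}$ and $\sigma_m(\xi) = \sum_{j=-m}^m \gamma_j e^{ij\xi}$, so that by the triangle inequality
\[
\bigl| \sigma(\xi)\sigma_{C_\infty}^{(n,m)}(\xi) - 1 \bigr|
\le
\bigl| \sigma(\xi) \bigr| \cdot \bigl| \sigma_{C_\infty}^{(n,m)}(\xi) - \sigma_m(\xi) \bigr|
+
\bigl| \sigma(\xi)\sigma_m(\xi) - 1 \bigr|.
\]
Since $\sigma$ is a positive continuous function on the compact set $[0,2\pi]$, it is bounded above by some constant $M$, so the first term is at most $M \sum_{j=-m}^m |c_j^{(n)} - \gamma_j|$.

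For the second term, I would use the fact, already noted in the excerpt, that $\sigma$ is a theta function with absolutely convergent Fourier series, hence $1/\sigma$ has absolutely convergent Fourier series by Wiener's lemma; consequently $\sum_{|j|>m}|\gamma_j| \to 0$ as $m \to \infty$. Writing $\sigma(\xi)\sigma_m(\xi) - 1 = \sigma(\xi)(\sigma_m(\xi) - \sigma^{-1}(\xi))$ and bounding $|\sigma_m(\xi) - \sigma^{-1}(\xi)| \le \sum_{|j|>m}|\gamma_j|$ uniformly in $\xi$, this term is at most $M\sum_{|j|>m}|\gamma_j|$, which can be made smaller than $\epsilon/2$ by choosing $m$ large enough. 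This fixes $m$.

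With $m$ now fixed, I would handle the first term using the convergence relation (\ref{chapno2.5}): for each of the finitely many indices $j \in \{-m,\dots,m\}$ we have $c_j^{(n)} = (A_n^{-1})_{j0} \to (A_\infty^{-1})_{j0} = \gamma_j$ as $n \to \infty$. Here one should note that $(A_\infty^{-1})_{j0} = \gamma_{-j} = \gamma_j$ because the cardinal-function coefficients are symmetric (the symbol $\sigma$ is even), matching the identification made before (\ref{chapno2.17})–(\ref{chapno2.18}). Since the sum over $j$ is finite, there is an $n_0$ such that for all $n \ge n_0$ we have $\sum_{j=-m}^m |c_j^{(n)} - \gamma_j| < \epsilon/(2M)$, making the first term less than $\epsilon/2$. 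Combining the two bounds gives the claim.

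The main obstacle is purely a matter of bookkeeping order: the truncation parameter $m$ must be chosen first, on the basis of the (fixed, $n$-independent) tail of the Fourier series of $1/\sigma$, and only afterwards is $n_0$ chosen, using the convergence in (\ref{chapno2.5}) for the now-finitely-many coefficients. One should also make sure the appeal to (\ref{chapno2.5}) is legitimate — it requires Theorem 9 of Buhmann and Micchelli (1991) as cited — and that the identification of $\lim_n (A_n^{-1})_{j0}$ with the $j$th Fourier coefficient of $1/\sigma$ is the one established in the discussion around (\ref{chapno2.14})–(\ref{chapno2.17}). No single step is analytically deep; the content is entirely in assembling the two uniform estimates in the correct logical sequence.
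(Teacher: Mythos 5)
Your proof is correct and follows essentially the same route as the paper: the same triangle-inequality split into a truncation error (controlled by the absolute/uniform convergence of the Fourier series of $1/\sigma$ via Wiener's lemma) and a coefficient error (controlled by (\ref{chapno2.5})), with $m$ fixed first and $n_0$ second. Your version is marginally tidier in the bookkeeping, since you arrange the bounds to sum to exactly $\epsilon$, whereas the paper ends with $\epsilon[1+(2m+1)\|\sigma\|_\infty]$ and appeals to the arbitrariness of $\epsilon$.
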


\begin{proof} 
The uniform convergence of the Fourier series for $\sigma^{-1}$ implies
that we can choose $m$ such that
\be 
\Bigl|\sigma(\xi)\sum_{j=-m}^m  \gamma_j e^{ij\xi} -1 \Bigr| \le
\epsilon, \qquad \xi \in [0,2\pi].
\ee
By (\ref{chapno2.5}), we can also choose $n_0$ such that 
$\max \{ |\gamma_j - c_j^{(n)}| : j = -m, \ldots, m\} \le \epsilon$,
when $n \ge n_0$.
Then we have
\begin{eqnarray}
\lefteqn{\Bigl| \sigma(\xi) \sum_{j=-m}^m c_j^{(n)} e^{ij\xi} - 1 \Bigr|}
\nonumber\\
   &\le& \Bigl| \sigma(\xi)\sum_{j=-m}^m \gamma_j e^{ij\xi} -1 \Bigr| +
 \Bigl| \sigma(\xi)
    \sum_{j=-m}^m (\gamma_j - c_j^{(n)}) e^{ij\xi}\Bigr| \nonumber\\
   &\le& \epsilon [ 1 + (2m+1)\|\sigma\|_\infty]. \nonumber\\
\end{eqnarray}
Since $\epsilon$ is arbitrary the proof is complete.
\end{proof}

\section{The Multiquadric}
The multiquadric interpolation matrix
\be
 A = \Bigl( \phi(\|x_j - x_k\|) \Bigr)_{j,k=1}^n, 
\ee
where $\phi(r) = (r^2 + c^2)^{1/2}$ and $(x_j)_{j=1}^n$ are points in
$\Rd$, is not positive definite. In Micchelli (1986),
it was shown to be
{\it almost negative definite}, that is for any real numbers
$(y_j)_{j=1}^n$ satisfying $\sum y_j = 0$ we have
\be
 \sum_{j,k=1}^n y_j y_k \phi(\|x_j - x_k\|) \le 0. \label{chapno3.1}\ee
Furthermore, inequality (\ref{chapno3.1}) is strict when $n \ge 2$, the
points $(x_j)_{j=1}^n$ are all different, 
and $\sum |y_j| > 0$. 
In other words, $A$ is negative definite on the subspace
$\langle e \rangle^\perp$, where $e = [1, 1, \ldots, 1]^T \in \RR^n$. 

Of course we cannot apply Algorithms \ref{alg2.1} and \ref{alg2.2} 
in this case. However,
we can use the almost negative definiteness of $A$ to solve a closely
related linearly constrained quadratic programming problem:
\begin{eqnarray}
 &\ &\hbox{ minimize } \quad \half \xi^T A \xi - b^T \xi \nonumber\\
 &\ &\hbox{ subject to } \quad e^T \xi = 0, \nonumber\\
\label{chapno3.2}
\end{eqnarray}
where $b$ can be any element of $\Rn$. 
Standard theory of Lagrange
multipliers guarantees the existence of a unique pair of vectors $\xi^* \in \RR^n$
and $\eta^* \in \RR^m$ satisfying the equations
\begin{eqnarray}
 A \xi^* + e \eta^* &=& b, \nonumber\\
          e^T \xi^* &=& 0,
\label{chapno3.3}
\end{eqnarray}
where $\eta^*$ is the Lagrange multiplier vector for the constrained
optimization problem (\ref{chapno3.2}). We do not go into further detail on this point
because the nonsingularity of the matrix
\be
 \pmatrix{ A & e \cr
e^T & 0} \label{chapno3.4}\ee
is well-known (see, for
instance, Powell (1990)). Instead we observe that one way to solve
(\ref{chapno3.3}) is to apply the following modification of Algorithm
\ref{alg2.1} 
to (\ref{chapno3.2}).

\begin{algorithm}
Let $P$ be any symmetric $n \times n$
matrix such that $\ker P = \langle e \rangle$.

{\obeylines 
Set $x_0 = 0$, $r_0 = Pb - PAP x_0$, $d_0 = r_0$.
For $k=0, 1, 2, \ldots$ do begin
\quad $a_k = r_k^Tr_k / d_k^T PAP d_k$
\qquad $x_{k+1} = x_k + a_k d_k$
\qquad $r_{k+1} = r_k - a_k PAP d_k$
\quad $b_k = r_{k+1}^T r_{k+1} / r_k^T r_k$
\qquad $d_{k+1} = r_{k+1} + b_k d_k$
\quad Stop if $\|r_{k+1}\|$ or $\|d_{k+1}\|$ is sufficiently small.
\quad end.\par}

\label{alg3.1}
\end{algorithm}

We observe that Algorithm \ref{alg3.1} solves the linearly constrained
optimization problem
\begin{eqnarray}
 &\ &\hbox{ minimize } \quad \half x^T PAP x - b^T Px \nonumber\\
 &\ &\hbox{ subject to } \quad e^T x = 0. \nonumber\\
\label{chapno3.5}
\end{eqnarray}
Moreover, the following elementary lemma implies that the solutions $\xi^*$of
(\ref{chapno3.3}) and $x^*$ of (\ref{chapno3.5}) are related by the equations $\xi^* = P x^*$.

\begin{lem}
Let $S$ be any symmetric $n \times n$ matrix
and let $K = \ker S$. Then $S : K^\perp \to K^\perp$ is a bijection.
In other words, given any $b \in K^\perp$ there is precisely one $a \in
K^\perp$ such that
\be 
S a = b. 
\label{chapno3.6}\ee
\label{lem3.2}
\end{lem}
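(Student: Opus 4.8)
The plan is to verify three things in turn: that $S$ genuinely maps $K^\perp$ into $K^\perp$ (so that the assertion even makes sense), that the resulting restricted map is injective, and that it is surjective. All three are elementary consequences of the symmetry of $S$ together with the rank--nullity theorem, so I do not expect a real obstacle here; the only point that needs a moment's care is the first one, namely well-definedness of the restriction.

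First I would show $S(K^\perp) \subseteq K^\perp$. Let $a \in K^\perp$ and let $v \in K = \ker S$ be arbitrary. Using the symmetry of $S$ we have $\langle S a, v \rangle = \langle a, S v \rangle = \langle a, 0 \rangle = 0$. Since $v \in K$ was arbitrary, $S a$ is orthogonal to $K$, that is, $S a \in K^\perp$. Hence $S$ does restrict to a linear map $S \colon K^\perp \to K^\perp$, and it remains to see that this map is a bijection.

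Next I would check injectivity. Suppose $a \in K^\perp$ satisfies $S a = 0$. Then $a \in \ker S = K$, so $a \in K \cap K^\perp = \{0\}$, whence $a = 0$; thus the restricted map has trivial kernel. Finally, surjectivity follows by a dimension count: since $\Rn = K \oplus K^\perp$ we have $\dim K^\perp = n - \dim K = {\rm rank}\, S$, and because $S \colon K^\perp \to K^\perp$ is injective its image has dimension $\dim K^\perp$. But that image is a subspace of $K^\perp$, a space of the same (finite) dimension, so the image is all of $K^\perp$. Therefore $S \colon K^\perp \to K^\perp$ is a bijection, which is exactly the statement that for each $b \in K^\perp$ there is precisely one $a \in K^\perp$ with $S a = b$.

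The only thing resembling a difficulty is remembering to establish well-definedness of the restriction before invoking injectivity and the dimension argument; once that is in place, the rest is routine finite-dimensional linear algebra.
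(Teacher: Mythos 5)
Your proof is correct, but it reaches surjectivity by a different route from the paper. The paper's argument starts from the identity $\RR^n=\ker M\oplus\hbox{Im\ }M^T$, which for symmetric $S$ gives $\hbox{Im\ }S=K^\perp$ at once; existence of a solution then follows by taking any preimage $\alpha$ of $b$ and replacing it by its component $a$ in $K^\perp$ (this also makes the well-definedness of the restriction automatic, since every image vector already lies in $K^\perp$), and uniqueness comes, as in your proof, from $K\cap K^\perp=\{0\}$. You instead verify invariance of $K^\perp$ by the direct computation $\langle Sa,v\rangle=\langle a,Sv\rangle=0$, prove injectivity of the restriction, and then deduce surjectivity from rank--nullity and equality of dimensions. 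Both arguments are elementary and complete; the paper's buys surjectivity in one line from the standard decomposition, while yours is slightly more self-contained (it uses symmetry only through the inner-product identity and never needs the fact that $\hbox{Im\ }S^T$ is the orthogonal complement of $\ker S$), at the cost of an extra dimension count. Your explicit check that $S(K^\perp)\subseteq K^\perp$ is a point the paper glosses over, so including it is a small improvement in completeness.
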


\begin{proof} 
For any $n \times n$ matrix $M$ we have the equation
\[
 \RR^n = \ker M \oplus \Image M^T.
\]
Consequently the symmetric matrix $S$ satisfies
\[ 
\RR^n = \ker S \oplus \Image S, 
\]
whence $\Image S = K^\perp$. Hence for every $b \in K^\perp$ there exists
$\alpha \in \Rn$ such that $S \alpha = b$. Now we can write $\alpha =
a + \beta$, where $a \in K^\perp$ and $\beta \in K$ are uniquely
determined by $\alpha$. Thus $S a = S \alpha = b$, and (\ref{chapno3.6}) has a
solution. If $a^\prime \in K^\perp$ also satifies (\ref{chapno3.6}), then their
difference $a - a^\prime$ lies in the intersection $K \cap K^\perp =
\{0\}$, which settles the uniqueness of $a$.
\end{proof}

Setting $P = S$ and $K = \langle e \rangle$ in Lemma \ref{lem3.2} 
we deduce that there is
exactly one $x^* \in \langle e \rangle^\perp$ such that
\[
 PAP x^* = Pb, 
\]
and $PAP$ is negative definite when restricted to the subspace
$\langle e \rangle^\perp$.
Following the development of Section 2, we define
\be
C = P^2, \qquad
\xi_k = Px_k, \qquad
\hbox{ and } \qquad \delta_k = P d_k,\label{chapno3.7}\ee
as in equation (\ref{chapno2.1}).
However, we cannot define $\rho_k$ by (\ref{chapno2.1}) because $P$ is singular.
One solution, advocated by Dyn, Levin and Rippa (1986), is to use the
recurrence for $(\rho_k)$ embodied in Algorithm \ref{alg2.1} without further
ado.

\begin{algorithm}
Choose any $\xi_0$ in $\langle e \rangle^\perp$. Set
$\rho_0 = b - A \xi_0$ and $\delta_0 = C \rho_0$.

{\obeylines 
For $k=0, 1, 2, \ldots$ do begin
\quad $a_k = \rho_k^T C \rho_k / \delta_k^T A \delta_k$
\qquad $\xi_{k+1} = \xi_k + a_k \delta_k$
\qquad $\rho_{k+1} = \rho_k - a_k A \delta_k$
\quad $b_k = \rho_{k+1}^T C \rho_{k+1} / \rho_k^T C \rho_k$
\qquad $\delta_{k+1} = C \rho_{k+1} + b_k \delta_k$
\quad Stop if $\|\rho_{k+1}\|$ or $\|\delta_{k+1}\|$ is sufficiently small.
end.\par}

\label{alg3.3a}
\end{algorithm}

However this algorithm is unstable in finite precision arithmetic, as
we shall see in our main example below. One modification that
successfully avoids instability is to force the condition 
\be\rho_k \in \langle e \rangle^\perp \label{chapno3.8}\ee
to hold for all $k$.
Now Lemma \ref{lem3.2} implies the existence of exactly one vector
$\rho_k \in \langle e \rangle^\perp$ for which $P \rho_k = r_k$. Therefore,
defining $Q$ to be the orthogonal projection onto $\langle e \rangle^\perp$, that
is $Q : x \mapsto x - e (e^T x)/(e^T e)$, we obtain

\begin{algorithm}
Choose any $\xi_0$ in $\langle e \rangle^\perp$. Set
$\rho_0 = Q(b - A \xi_0)$, $\delta_0 = C \rho_0$.

{\obeylines 
For $k=0, 1, 2, \ldots$ do begin
\quad $a_k = \rho_k^T C \rho_k / \delta_k^T A \delta_k$
\qquad $\xi_{k+1} = \xi_k + a_k \delta_k$
\qquad $\rho_{k+1} = Q(\rho_k - a_k A \delta_k)$
\quad $b_k = \rho_{k+1}^T C \rho_{k+1} / \rho_k^T C \rho_k$
\qquad $\delta_{k+1} = C \rho_{k+1} + b_k \delta_k$
\quad Stop if $\|\rho_{k+1}\|$ or $\|\delta_{k+1}\|$ is sufficiently small.
end.\par}
\label{alg3.3b}
\end{algorithm}

We see that the only restriction on $C$ is that it must be a
non-negative definite symmetric matrix such that $\ker C = \langle e \rangle$. It is
easy to construct such a matrix given a positive definite symmetric
matrix D by a rank one modification:
\be C = D - {(De)(De)^T \over e^T De}. \label{chapno3.9}\ee
The Cauchy-Schwarz inequality implies that $x^T C x \ge 0$ with
equality if and only if $x \in \langle e \rangle$. Of course we do not need to form
$C$ explicitly, since $C : x \mapsto Dx - (e^T Dx / e^T De) De$.
Before constructing $D$ we consider the spectral properties of
$A_\infty = (\phi(j-k))_{j,k\in\ZZ}$ in more detail.

A minor
modification to Proposition 5.2.2 of Baxter (1992)
yields the following useful result.
Let us say that a complex sequence $(y_j)_\ZZ$ is {\em zero-summing} if
it is finitely supported and satisfies $\sum y_j = 0$. The {\em symbol
function}
\be
\sigma(\xi) = \sum_{k\in\ZZ} \phihat(\xi+2\pi k), \qquad \xi \in \RR,
\ee
now requires the distributional Fourier transform of the
multiquadric. In the univariate case, this is given by
\be
\phihat(\xi) = -(2 c/|\xi|) K_1(c|\xi|), \qquad\xi \in
\RR\setminus\{0\},
\ee
where $K_1$ is a modified Bessel function. The symbol function is
studied extensively in Baxter (1994).

\begin{figure}[h]
\begin{center}
\psfig{file=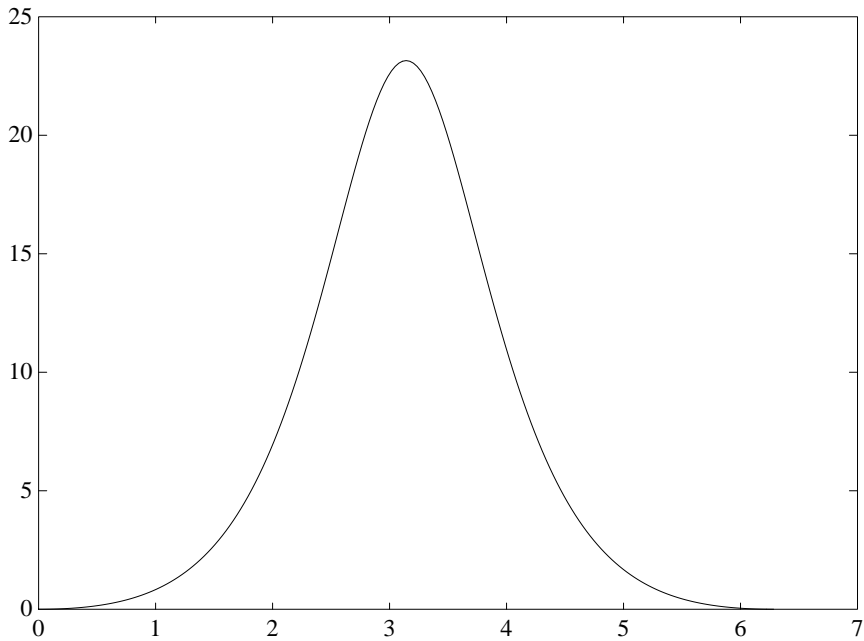,width=5in}
\caption{The reciprocal symbol function $1/\sigma$ for
the multiquadric.}
\label{fig2}
\end{center}
\end{figure}

\begin{prop}
For every $\eta \in (0,2\pi)$ we
can find a set $\{ (y_j^{(n)})_{j\in\ZZ} : n = 1, 2, \ldots \}$ of
zero-summing sequences such that
\be 
\lim_{n\to\infty} \sum_{j,k\in\ZZ} y_j^{(n)} \overline{y_k^{(n)}} \phi(j-k)
\Bigl/ \sum_{j\in\ZZ} |y_j^{(n)}|^2 = \sigma(\eta). 
\label{chapno3.10}
\ee

\label{prop3.4}
\end{prop}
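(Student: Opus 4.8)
The plan is to exhibit, for the given $\eta\in(0,2\pi)$, a sequence of zero-summing ``wave packets'' whose Fourier energy concentrates at the frequency $\eta$, and then to read off $\sigma(\eta)$ from the Toeplitz quadratic form by an approximate-identity argument; this is the discrete analogue of the classical fact that the (essential) spectrum of a Toeplitz operator is the range of its symbol. The tool that makes this work --- and which is exactly what ``a minor modification to Proposition 5.2.2 of Baxter (1992)'' supplies --- is the spectral representation of the quadratic form on zero-summing sequences: if $y=(y_j)_{j\in\ZZ}$ is zero-summing and $P(\xi)=\sum_{j\in\ZZ}y_j e^{ij\xi}$ is its (trigonometric-polynomial) symbol, then
\be
\sum_{j,k\in\ZZ} y_j\overline{y_k}\,\phi(j-k) = \tint |P(\xi)|^2\,\sigma(\xi)\,d\xi ,
\ee
whereas $\sum_{j\in\ZZ}|y_j|^2=\tint|P(\xi)|^2\,d\xi$ by Parseval. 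The delicate point here is that, although $\phihat(\xi)=-(2c/|\xi|)K_1(c|\xi|)=-2\xi^{-2}+O(\log|\xi|)$ near $\xi=0$ (using $K_1(z)=z^{-1}+O(z\log z)$) forces $\sigma$ to have an $\xi^{-2}$ singularity at every point of $2\pi\ZZ$, the weight $|P(\xi)|^2$ has a \emph{double} zero there, because $P(0)=\sum_j y_j=0$ and $P$ is smooth; hence the integrand is bounded and the identity is meaningful.

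For the construction, let $K_n(\theta)=(n+1)^{-1/2}\sum_{j=0}^n e^{ij\theta}$, so that $|K_n(\theta)|^2=F_n(\theta)$ is the Fej\'er kernel: $F_n\ge 0$, $\tint F_n(\theta)\,d\theta=1$, and $F_n$ is an approximate identity concentrated at $\theta\in2\pi\ZZ$. Set $c_j^{(n)}=(n+1)^{-1/2}e^{-ij\eta}$ for $0\le j\le n$ and $c_j^{(n)}=0$ otherwise, and define
\be
y_j^{(n)} = c_j^{(n)} - c_{j-1}^{(n)}, \qquad j\in\ZZ .
\ee
Then $y^{(n)}$ is supported in $\{0,1,\ldots,n+1\}$, and $\sum_j y_j^{(n)}$ telescopes to $c_{n+1}^{(n)}-c_{-1}^{(n)}=0$, so $y^{(n)}$ is zero-summing; it is also nonzero (indeed $y_0^{(n)}=(n+1)^{-1/2}$), so the family $\{(y_j^{(n)})_{j\in\ZZ}:n=1,2,\ldots\}$ is of the required type. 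Its symbol is $P_n(\xi)=(1-e^{i\xi})K_n(\xi-\eta)$, hence $|P_n(\xi)|^2=|1-e^{i\xi}|^2 F_n(\xi-\eta)=4\sin^2(\xi/2)\,F_n(\xi-\eta)$, a nonnegative mass concentrating, as $n\to\infty$, at $\xi=\eta$.

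Finally, put $h(\xi)=4\sin^2(\xi/2)\,\sigma(\xi)$. Since $\sigma$ is smooth on $(0,2\pi)$ and its only singularities, at the points of $2\pi\ZZ$, are of order $\xi^{-2}$, the double zero of $4\sin^2(\xi/2)$ there shows that $h$ extends to a continuous $2\pi$-periodic function; moreover $h(\eta)=4\sin^2(\eta/2)\,\sigma(\eta)$, and $4\sin^2(\eta/2)=|1-e^{i\eta}|^2>0$ because $\eta\in(0,2\pi)$. Substituting $y^{(n)}$ into the two identities above gives
\be
\frac{\sum_{j,k\in\ZZ} y_j^{(n)}\overline{y_k^{(n)}}\,\phi(j-k)}{\sum_{j\in\ZZ}|y_j^{(n)}|^2}
= \frac{\tint F_n(\xi-\eta)\,h(\xi)\,d\xi}{\tint F_n(\xi-\eta)\,4\sin^2(\xi/2)\,d\xi},
\ee
and the numerator and denominator are the Fej\'er means at $\eta$ of the continuous $2\pi$-periodic functions $h$ and $4\sin^2(\cdot/2)$; by the standard convergence of Fej\'er means they tend respectively to $h(\eta)$ and to $4\sin^2(\eta/2)$. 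Hence the ratio converges to $\frac{h(\eta)}{4\sin^2(\eta/2)}=\sigma(\eta)$, which is (\ref{chapno3.10}).

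The one genuinely non-routine step is the first: making the spectral representation rigorous for the multiquadric, i.e.\ justifying that the distributional Fourier inversion $\phi(m)=(2\pi)^{-1}\int_\RR\phihat(\xi)e^{im\xi}\,d\xi$ may be paired with $|P(\xi)|^2$ once $P(0)=0$, together with control of the precise local behaviour of $\phihat$ at the origin; this is exactly the content imported from Baxter (1992, 1994) and Micchelli (1986), and once it is granted the remainder is a routine approximate-identity calculation. The particular choices are inessential: $1-e^{i\xi}$ may be replaced by any fixed trigonometric polynomial with a simple zero at $0$ and no zero at $\eta$, and the Fej\'er kernel by any non-negative summability kernel that concentrates at $\eta$.
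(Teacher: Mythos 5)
Your proposal is correct and is essentially the paper's own argument: your $y^{(n)}$ is the coefficient sequence of $(1-e^{i\xi})K_n(\xi-\eta)$, whose squared modulus $4\sin^2(\xi/2)F_n(\xi-\eta)$ coincides (up to normalisation, which cancels in the ratio) with the paper's choice $\sin^2(\xi/2)K_n(\xi-\eta)$, and both then conclude by Parseval plus the approximate-identity property of the Fej\'er kernel applied to the continuous function $\sin^2(\xi/2)\sigma(\xi)$. Your explicit remark that the double zero of the symbol at $2\pi\ZZ$ absorbs the double pole of $\sigma$ is a helpful gloss on what the paper imports from Baxter (1992, 1994).
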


\begin{proof} 
We adopt the proof technique of Proposition 5.2.2 of Baxter (1992). 
For each
positive integer $n$ we define the trigonometric polynomial
\[
L_n(\xi) = n^{-1/2} \sum_{k=0}^{n-1} e^{ik\xi}, \qquad \xi \in
\RR,
\]
and we recall from Section 2 of Baxter (1994) that 
\be
 K_n(\xi) = {\sin^2 n\xi/2 \over n \sin^2 \xi/2} = \left| L_n(\xi)
\right|^2, \label{chapno3.11}\ee
where $K_n$ is the $n$th degree Fej\'er kernel. We now choose
$(y^{(n)}_j)_{j\in\ZZ}$ to be the Fourier coefficients of the
trigonometric polynomial $\xi \mapsto L_n(\xi - \eta) \sin \xi/2$,
which implies the relation
\[
 \Bigl| \sum_{j\in\ZZ} y^{(n)}_j e^{ij\xi} \Bigr|^2 = \sin^2 \xi/2\ 
K_n(\xi-\eta), \]
and we see that $(y_j^{(n)})_{j\in\ZZ}$ is a zero-summing sequence. By
the Parseval relation we have
\be 
\sum_{j\in\ZZ} |y_j^{(n)}|^2 = (2\pi)^{-1} \int_0^{2\pi} \sin^2
\xi/2 \ K_n(\xi - \eta)\,d\xi 
\label{chapno3.12}\ee
and the approximate identity property of the Fej\'er kernel (Zygmund
(1988), p. 86) implies that
\[
 \sin^2 \eta/2 
  = \lim_{n\to\infty} (2\pi)^{-1} \int_0^{2\pi} \sin^2
\xi/2 \ K_n(\xi - \eta)\,d\xi 
  = \lim_{n\to\infty} \sum_{j\in\ZZ} |y_j^{(n)}|^2.
\]
Further, because $\sigma$ is continuous on $(0,2\pi)$ (Baxter (1994),
Section 4.4), we have 
\begin{eqnarray*}
 \sin^2 \eta/2 \ \sigma(\eta)
  &=& \lim_{n\to\infty} (2\pi)^{-1} \int_0^{2\pi} \sin^2
\xi/2 \ K_n(\xi - \eta) \sigma(\xi)\,d\xi \\
  &=& \lim_{n\to\infty} 
       \sum_{j,k\in\ZZ} y^{(n)}_j \overline{y^{(n)}_k} \phi(j-k). \\
\end{eqnarray*}
\end{proof}

Thus we have shown that, just as in the classical theory of Toeplitz
operators (Grenander and Szeg\H{o} (1984)), everything depends on the
range of values of the symbol function $\sigma$. Because $\sigma$
inherits the double pole that $\phihat$ enjoys at zero, we have
$\sigma \colon (0,2\pi) \mapsto (\sigma(\pi), \infty)$. In Figure
\ref{fig2} we display the function $\sigma^{-1}$.

Now let $m$ be a positive integer and let $(d_j)_{j=-m}^m$ be an even
sequence of real numbers. We define a bi-infinite banded symmetric
Toeplitz matrix $D_\infty$ by the equations
\be (D_\infty)_{jk} = \cases{ d_{j-k},  \qquad |j-k| \le m, \cr 0,
\qquad \hbox{ otherwise }.} \label{chapno3.13}\ee
Thus $(D_\infty A_\infty)_{jk} = \psi(j-k)$ where $\psi(x) =
\sum_{l=-m}^m d_l \phi(x-l)$. Further
\be \sum_{j,k\in\ZZ} y_j \overline{y_k} \psi(j-k) 
  = (2\pi)^{-1} \int_0^{2\pi} \Bigl| \sum_{j\in\ZZ} y_j e^{ij\xi}
\Bigr|^2 \sigma_{D_\infty}(\xi) \sigma(\xi)\,d\xi,
\label{chapno3.14}\ee
where the symbol function $\sigma_{D_\infty}$ for the Toeplitz
operator $D_\infty$ is given by
\be
\sigma_{D_\infty}(\xi) = \sum_{j=-m}^m d_j e^{ij\xi}, \xi \in \RR.
\ee
Now the function $\sigma \sigma_{D_\infty}$ is
continuous for $\xi \in (0,2\pi)$, so the argument of Proposition
\ref{prop3.4} also shows that, for every $\eta \in (0,2\pi)$, we can find
a set $\{ (y^{(n)}_j)_{j\in\ZZ} : n = 1, 2, \ldots\ \}$ of
zero-summing sequences such that
\be 
\lim_{n\to\infty} 
\frac{\sum_{j,k\in\ZZ} y^{(n)}_j \overline{y^{(n)}_k} \psi(j-k)}
 {\sum_{j\in\ZZ} |y^{(n)}_j|^2} 
= \sigma_{D_\infty}(\eta)\sigma(\eta). 
\label{chapno3.15}\ee

A good preconditioner must ensure that $\{
\sigma_{D_\infty}(\xi) \sigma(\xi) : \xi \in (0,2\pi) \}$ is a bounded
set. Because of the form of $\sigma_{D_\infty}$ we have the equation
\be \sum_{j=-m}^m d_j = 0. \label{chapno3.16}\ee
Moreover, as in Section 2, we want the approximation
\be \sigma_{D_\infty}(\xi) \sigma(\xi) \approx 1, \qquad \xi \in
(0,2\pi), \label{chapno3.17}\ee
and we need $\sigma_{D_\infty}$ to be a non-negative trigonometric
polynomial which is positive almost everywhere, which ensures that
every one of its principal minors is positive definite.

Let us define
\be
c_j^{(n)} = -\left(A_n^{-1}\right)_{j0}, \qquad j = -m, \ldots, m,
\label{3.15a}
\ee
and 
\be
\sigma^{-1}(\xi) = \sum_{j\in\ZZ} \gamma_j e^{ij\xi}, \qquad \xi \in
\RR.
\label{3.15b}
\ee
Then Theorem 9 of Buhmann and Micchelli (1991) states that 
\be
\lim_{n \to \infty} c_j^{(n)} = \gamma_j,
\label{3.15c}
\ee
for any given fixed integer $j$. We shall use this fact to construct a
suitable $\sigma_{D_\infty}$. First we subtract a multiple of the
vector $[1, \ldots, 1]^T \in \RR^{2m+1}$ from $(c_j^{(n)})_{j=-m}^m$ to
form a new vector $(d_j)_{j=-m}^m$ satisfying $\sum d_j = 0$, and we
observe that, by (\ref{3.15c}), $\sigma_{D_{\infty}}(\xi)$ is one-signed
for all sufficiently large values of $n$.
For the numerical experiments here, we have chosen
$n=64$ and $m=9$. 

\begin{table}
\caption{Preconditioned CG -- $m=9$, $n=64$,
$N=2,048$}
\begin{tabular}{r|r}\hline\hline
{\rm Iteration}&{\rm Error}\cr
\hline
$1$& $3.975553 \times 10^{4}$\cr
$2$& $8.703344 \times 10^{-1}$\cr
$3$& $2.463390 \times 10^{-2}$\cr
$4$& $8.741920 \times 10^{-3}$\cr
$5$& $3.650521 \times 10^{-4}$\cr
$6$& $5.029770 \times 10^{-6}$\cr
$7$& $1.204610 \times 10^{-5}$\cr
$8$& $1.141872 \times 10^{-7}$\cr
$9$& $1.872273 \times 10^{-9}$\cr
$10$& $1.197310 \times 10^{-9}$\cr
$11$& $3.103685 \times 10^{-11}$\cr
\hline
\end{tabular}
\label{table3}
\end{table}

\begin{table}
\caption{Preconditioned CG -- $m=9$, $n=64$, $N=32,768$}
\begin{tabular}{r|r}\hline\hline
{\rm Iteration}&{\rm Error}\\
\hline
 $1$& $2.103778 \times 10^{5}$\cr
 $2$& $4.287497 \times 10^{0}$\cr
 $3$& $5.163441 \times 10^{-1}$\cr
 $4$& $1.010665 \times 10^{-1}$\cr
 $5$& $1.845113 \times 10^{-3}$\cr
 $6$& $3.404016 \times 10^{-3}$\cr
 $7$& $3.341912 \times 10^{-5}$\cr
 $8$& $6.523212 \times 10^{-7}$\cr
 $9$& $1.677274 \times 10^{-5}$\cr
 $10$& $1.035225 \times 10^{-8}$\cr
 $11$& $1.900395 \times 10^{-10}$\cr
\hline
\end{tabular}
\label{table4}
\end{table}

Thus, given 
\[ 
A_N = \Bigl( \phi(j-k) \Bigr)_{j,k=-N}^N
\]
for any $N \ge n$, we let $D_N$ be any $(2N+1) \times (2N+1)$ principal
submatrix of $D_\infty$ and define the preconditioner $C_N$ by the equation
\be C_N = D_N - {(D_N e) (D_N e)^T \over e^T D_N e},
\label{chapno3.22}\ee
where $e = [1, \ldots, 1]^T \in \RR^{2N+1}$. We reiterate that we
actually compute the matrix-vector product $C_N x$ by the operations
$x \mapsto D_N x - (e^T D_N x/e^T D_N e) e$ rather than by storing the
elements of $C_N$ in memory.

$C_N$ provides an excellent preconditioner. Tables \ref{table3} and 
\ref{table4}
illustrate its use when Algorithm \ref{alg3.3b} is applied to the linear
system
\begin{eqnarray}
 A_N x + e y &=& b, \nonumber\\
       e^T x &=& 0,\nonumber\\
\label{chapno3.23}
\end{eqnarray}
when $N=2,048$ and $N = 32,768$ respectively.
Here $y \in \RR$, $e = [1, \ldots, 1]^T \in \RR^{2N+1}$ and $b \in
\RR^{2N+1}$ consists of pseudo-random real numbers uniformly
distributed in the interval $[-1,1]$. Again, this behaviour is typical
and all our numerical experiments indicate that the number of steps is
independent of $N$. We remind the reader that the error shown is
$\|\rho_{k+1}\|$, but that the iterations are
stopped when either $\|\rho_{k+1}\|$ or $\|\delta_{k+1}\|$ is less
than $10^{-13} \|b\|$, where we are using the notation of Algorithm \ref{alg3.3b}.

\begin{figure}[h]
\begin{center}
\psfig{file=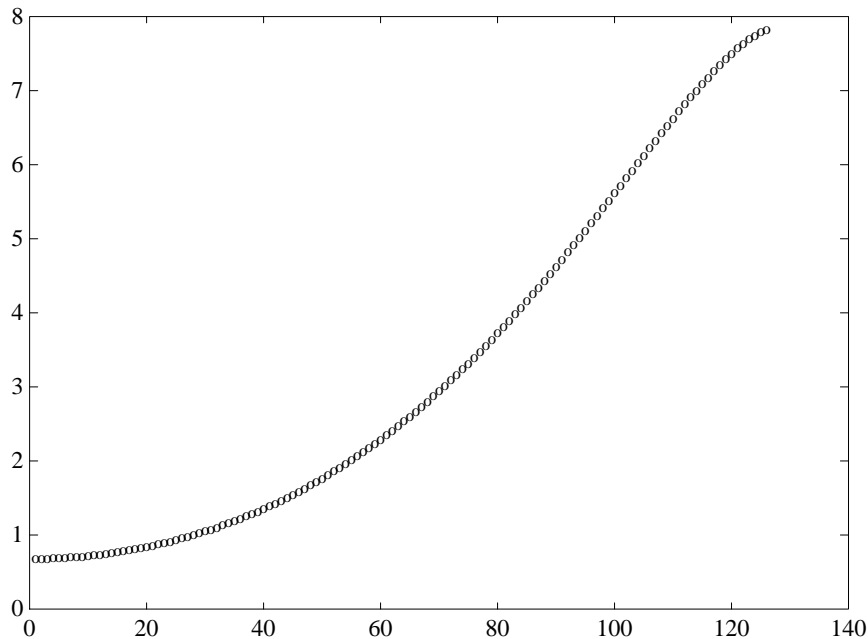,width=5in}
\caption{The spectrum of $C_n A_n$ for $m=1$
and $n=64$.}
\label{fig5}
\end{center}
\end{figure}

\begin{figure}[h]
\begin{center}
\psfig{file=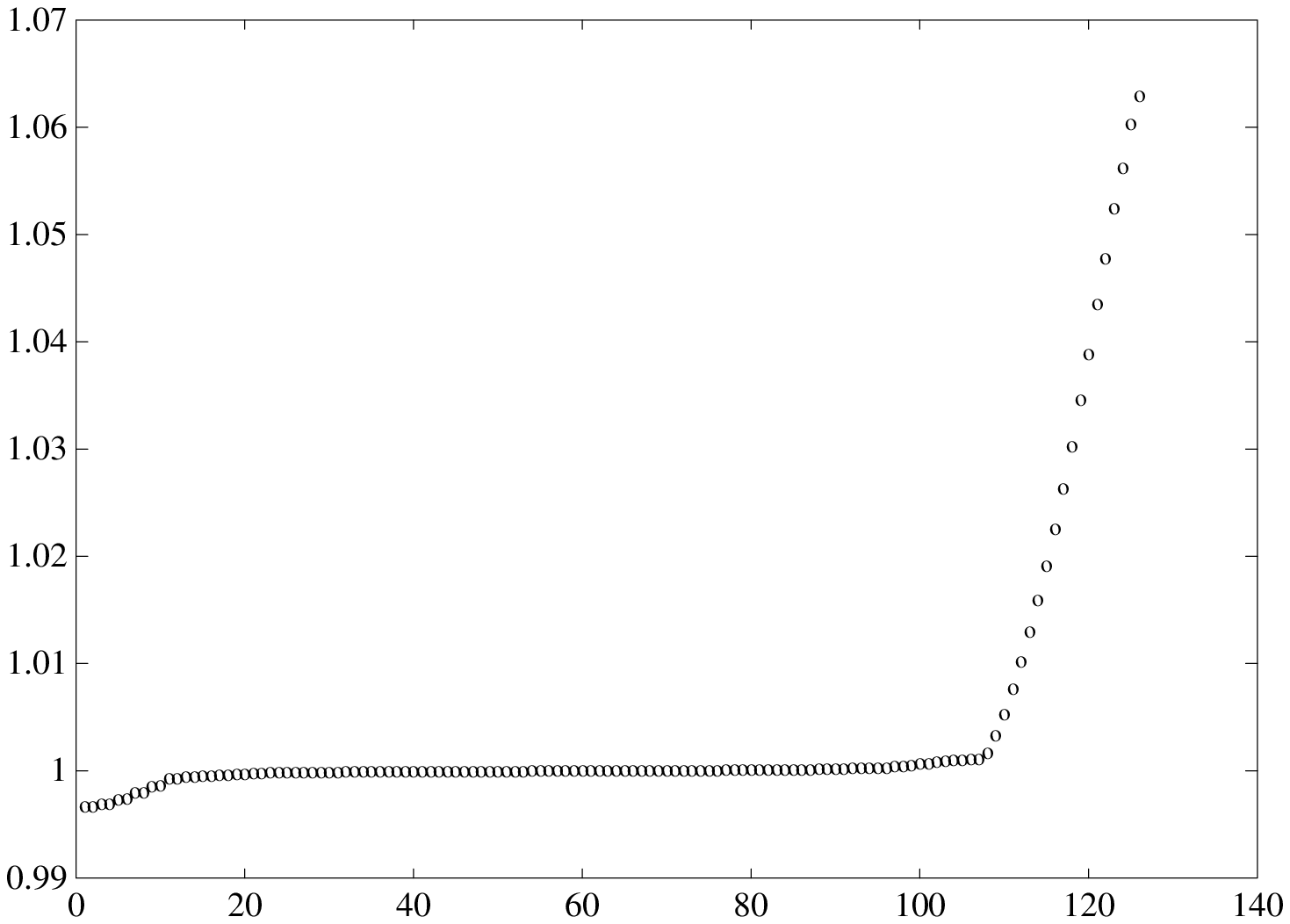,width=5in}
\caption{The spectrum of $C_n A_n$ for
$m=9$ and $n=64$.}
\label{fig6}
\end{center}
\end{figure}

\begin{table}
\caption{Preconditioned CG -- $m=1$, $n=64$, $N=8,192$}
\begin{tabular}{r|r}\hline\hline
{\rm Iteration}&{\rm Error}\cr
\hline
 $1$& $2.645008 \times 10^{4}$\cr
 $10$& $8.632419 \times 10^{0}$\cr
 $20$& $9.210298 \times 10^{-1}$\cr
 $30$& $7.695337 \times 10^{-1}$\cr
 $40$& $3.187051 \times 10^{-5}$\cr
 $50$& $5.061053 \times 10^{-7}$\cr
 $60$& $7.596739 \times 10^{-9}$\cr
 $70$& $1.200700 \times 10^{-10}$\cr
 $73$& $3.539988 \times 10^{-11}$\cr
 $74$& $1.992376 \times 10^{-11}$\cr
\hline
\end{tabular}
\label{table5}
\end{table}

\begin{table}
\caption{Algorithms  3.3a \& b -- $m=1$, $n=64$,
$N=64$, $b = [1, 4, \ldots, N^2]^T$.}
\begin{tabular}{r|rr}\hline\hline
{\rm Iteration}&{$\|\delta_k\|$ }-- 3.3a&{$\|\delta_k\|$ }-- 3.3b \cr
\hline
 $1$& $4.436896 \times 10^{4}$& $4.436896 \times 10^{4}$\cr
 $2$& $2.083079 \times 10^{2}$& $2.083079 \times 10^{2}$\cr
 $3$& $2.339595 \times 10^{0}$& $2.339595 \times 10^{0}$\cr
 $4$& $1.206045 \times 10^{-1}$& $1.206041 \times 10^{-1}$\cr
 $5$& $1.698965 \times 10^{-3}$& $1.597317 \times 10^{-3}$\cr
 $6$& $6.537466 \times 10^{-2}$& $6.512586 \times 10^{-2}$\cr
 $7$& $1.879294 \times 10^{-4}$& $9.254943 \times 10^{-6}$\cr
 $8$& $2.767714 \times 10^{-2}$& $1.984033 \times 10^{-7}$\cr
 $9$& $3.453789 \times 10^{-4}$\cr
 $10$& $1.914126 \times 10^{-3}$\cr
 $20$& $4.628447 \times 10^{-1}$\cr
 $30$& $3.696474 \times 10^{-0}$\cr
 $40$& $8.061922 \times 10^{+3}$\cr
 $50$& $2.155310 \times 10^{0}$\cr
 $100$& $3.374467 \times 10^{-1}$\cr
\hline
\end{tabular}
\label{table6}
\end{table}

It is interesting to compare Table \ref{table3} with Table \ref{table5}.
Here we have chosen $m=1$, and the preconditioner is essentially a multiple
of the second
divided difference preconditioner advocated by Dyn, Levin and Rippa
(1986). Indeed, we find that $d_0 = 7.8538$ and $d_1 = d_{-1} =
-3.9269$. 
We see that its behaviour is clearly inferior to the preconditioner
generated by choosing $m=9$. Furthermore, this is to be expected,
because we are choosing a smaller finite section to approximate the
reciprocal of the symbol function. However, because
$\sigma_{D_\infty}(\xi)$ is a multiple of $\sin^2 \xi/2$, this preconditioner
still possesses the property that $\{ \sigma_{D_\infty}(\xi)
\sigma(\xi) : \xi \in (0,2\pi) \}$ is a bounded set of real numbers.

It is also interesting to compare the spectra of $C_n A_n$ for $n=64$
and $m= 1$ and $m = 9$. Accordingly, Figures \ref{fig5} and \ref{fig6}
display all but the largest nonzero eigenvalues of $C_n A_n$ for $m=1$ and
$m=6$ respectively. The largest eigenvalues are $502.6097$ and
$288.1872$, respectively, and these were omitted from the plots
in order to reveal detail at smaller scales. We see that the clustering of the spectrum
when $m=9$ is excellent.  

The final topic in this section demonstrates the instability of
Algorithm \ref{alg3.3a} when compared with Algorithm \ref{alg3.3b}. We refer the
reader to Table \ref{table6}, where we have chosen $m=9$,
$n=N=64$, and setting $b = [1, 4, 9, \ldots, N^2]^T$.

\noindent
The iterations
for Algorithm \ref{alg3.3b}, displayed in Table \ref{table6}, were stopped at
iteration $108$. For Algorithm \ref{alg3.3a}, iterations were stopped when
either
$\|\rho_{k+1}\|$ or $\|\delta_{k+1}\|$ became smaller than $10^{-13}
\|b\|$. It is useful to display the norm of $\|\delta_k\|$ rather than
$\|\rho_k\|$ in this case. We see that the two algorithms almost agree
on the early interations, but that Algorithm \ref{alg3.3a} soon begins
cycling, and no convergence seems to occur. Thus when $\rho_k$ can
leave the required subspace due to finite precision arithmetic, it is
possible to attain non-descent directions.

\end{document}